\theoremstyle{plane}
	\newtheorem{thm}{Theorem}[section]
	\newtheorem{lem}{Lemma}[section]
	\newtheorem{prop}{Proposition}[section]
	\newtheorem{cor}{Corollary}[section]
\theoremstyle{definition}
	\newtheorem{exmp}{Example}[section]
	\newtheorem{rmk}{Remark}[section]
\newcommand{\Z}{\mathbb Z}
\newcommand{\RP}{{\mathbb R}{\mathbb P}}
\subjclass[2010]{Primary 57M25; Secondary 57M27, 57M50}
\begin{document}

\title{Construction of spines of two-bridge link complements and upper bounds of their Matveev complexities}
\author{Masaharu Ishikawa and Keisuke Nemoto}
\date{}
\keywords{complexity, hyperbolic volume, triangulations, two-bridge links.}

\maketitle
\thispagestyle{empty}

\begin{abstract}

We give upper bounds of the Matveev complexities of two-bridge link complements by constructing their spines explicitly. In particular, we determine the complexities for an infinite sequence of two-bridge links corresponding to the continued fractions of the form $[2,1,\ldots,1,2]$. We also give upper bounds for the 3-manifolds obtained as meridian-cyclic branched coverings of the 3-sphere along two-bridge links.

\end{abstract}

\section{Introduction}

Let $M$ be a compact connected 3-manifold possibly with boundary. If $M$ has nonempty boundary then a  polyhedron $P\subset M$ to which $M$ collapses is called a {\it spine} of $M$. If $M$ is closed then a spine of $M$ means that of $M\setminus B^3$, where $B^3$ is a 3-ball in $M$. A spine $P$ of $M$ is said to be {\it almost-simple} if the link of any point can be embedded into the complete graph $K_4$ with four vertices. A point of almost-simple spine whose link is $K_4$ is called a {\it true vertex}. The minimal number $c(M)$ of true vertices among all almost-simple spines of $M$ is called the {\it complexity} of $M$.

The notion of the complexity was introduced by S. Matveev in \cite{Matveev2}. The complexity gives an efficient measure on the set of all compact 3-manifolds $\mathcal{M}$, because it has the following properties: the complexity is additive under connected sum, that is, $c(M_1\sharp M_2)=c(M_1)+c(M_2)$, and it has a finiteness property, that is, for any $n\in \Z_{\geqslant 0}$, there exists finitely many closed irreducible manifolds $M\in \mathcal{M}$ with $c(M)=n$. Note that if $M$ is closed, irreducible and other than $S^3$, $\RP^3$ and $L(3,1)$ then $c(M)$ coincides with the minimal number of ideal tetrahedra of all triangulations of $M$.

Determining the complexity $c(M)$ of a given 3-manifold $M$ is very difficult in general. For the complexity of the lens space $L(p,q)$, Matveev proved the upper inequality $c(L(p,q))\leqslant S(p,q)-3$, where $S(p,q)$ is the sum of all partial quotients in the expansion of $p/q$ as a regular continued fraction with positive entries, and conjectured that the equality holds (see also \cite{Matveev}). In recent studies, Jaco, Rubinstein and Tillmann solved this conjecture positively for some infinite sequences of lens spaces \cite{Jaco}. Petronio and Vesnin studied the complexity of closed 3-manifolds which are obtained as meridian-cyclic branched coverings of $S^3$ along two-bridge links \cite{PV}. In the case of compact manifolds with nonempty boundary, Fominykh and Wiest obtained sharp upper bounds on the complexity of torus link complements \cite{Fominykh}. A certain lower bound of the complexity of a two-bridge link complement is given in \cite{PV}. There are several related studies, see for instance \cite{Frigerio,Frigerio2,Anisov,Jaco2,Yu,Yu2}.

In this paper, we give upper bounds of the complexities of two-bridge link complements.

Let $K(p,q)$ be a two-bridge link in the 3-sphere $S^3$, where $p,q$ are coprime integers with $p\geqslant 2$ and $q \ne 0$. We may represent it by using Conway's notation as $C(a_1,\ldots ,a_n)$, where the integers $a_i$ are the partial quotients of a regular continued fraction of $p/q$. We represent the regular continued fraction of $p/q$ as $p/q=[a_1,\ldots , a_n]$. For each continued fraction, $K(p,q)$ has a diagram as shown in Figure \ref{fig1}. By taking the mirror image if necessary, we may assume that $p/q>0$. In this paper we only consider the continued fraction of $p/q$ such that each $a_i$ is positive and $a_1, \ a_n >1$. Let $N(K(p,q))$ be a compact tubular neighborhood of $K(p,q)$ in $S^3$ and ${\rm int}N(K(p,q))$ its interior.


\begin{figure}[htbp]
\begin{center}
\includegraphics[scale=0.5]{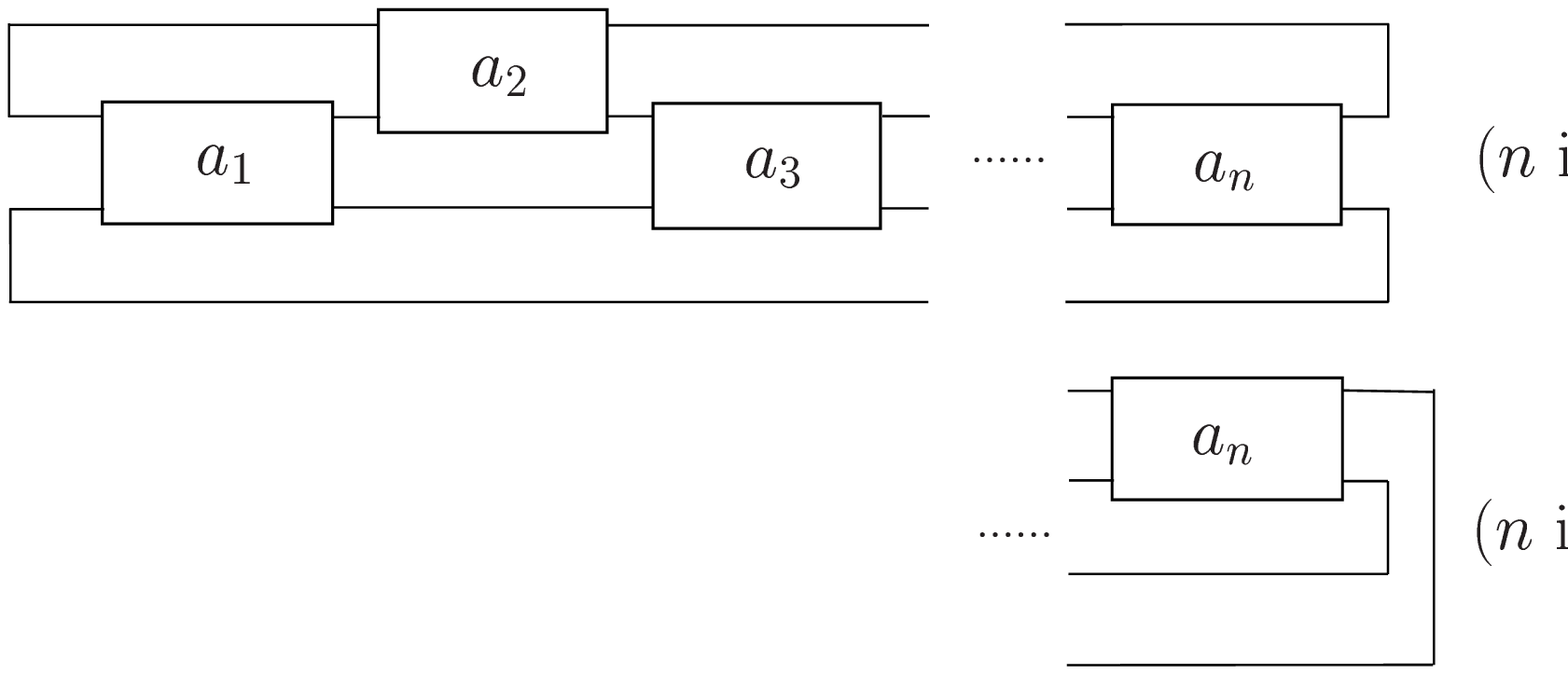}
\end{center}
\begin{center}
\includegraphics[scale=0.9]{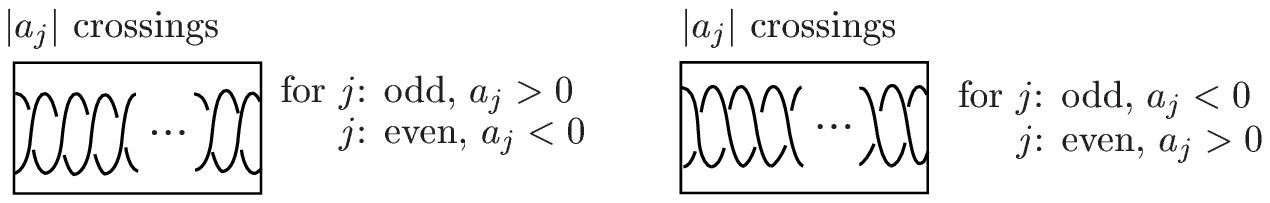}
\caption{Two-bridge link $C(a_1,\ldots ,a_n)$.}
\label{fig1}
\end{center}
\end{figure}

The main result of this paper is the following:

\begin{thm}\label{thm1}
Let $K(p,q)$ be a two-bridge link with $p/q=[a_1,\ldots ,a_n]$, $a_i>0$ and $a_1, \ a_n >1$. Then, 
\[
c(S^3\setminus \mathrm{int}N(K(p,q)))\leqslant \sum_{i=1}^n a_i +2(n-3)-\sharp \{  a_i=1 \},
\]
where $\sharp \{  a_i=1 \}$ is the number of indices $i$ such that $a_i=1$.  
\end{thm}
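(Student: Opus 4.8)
The plan is to build an explicit almost-simple (in fact, a special) spine of $S^3\setminus\mathrm{int}N(K(p,q))$ directly from the Conway diagram $C(a_1,\ldots,a_n)$ in Figure~\ref{fig1}, and then simply count the true vertices. I would start from a standard handle decomposition of the link complement associated with a bridge presentation: the two-bridge diagram gives a genus-two Heegaard surface, and the complement collapses onto a $2$-complex whose $1$-skeleton follows the ``plat'' structure of the diagram. Concretely, I would read the continued-fraction data as a sequence of twist regions: each partial quotient $a_i$ contributes a vertical stack of $a_i$ crossings, and each crossing is modelled by a local piece of spine. The key is to choose this local model so that a crossing in the interior of a twist region costs one true vertex, while the transitions between consecutive twist regions (the ``$+1$'' bends in the $4$-plat) and the two extremal regions can be arranged more cheaply, which is exactly what produces the correction terms $2(n-3)$ and $-\sharp\{a_i=1\}$.

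The main steps, in order, would be: (1) fix a spine $P_0$ for the complement of the trivial $2$-bridge tangle (the two bridges), for which $c$-contribution is $0$; (2) describe a ``crossing gadget'' — a standard piece of special polyhedron with a single true vertex — and show how gluing one gadget per crossing, following the braiding prescribed by $C(a_1,\ldots,a_n)$, yields a spine $P$ of the whole complement; (3) observe that consecutive gadgets within one twist region glue along a disk without creating extra vertices, so a block of $a_i$ crossings costs $a_i$ true vertices, except that when $a_i=1$ the lone crossing can be absorbed into the adjacent junction, saving one vertex — this is the source of $-\sharp\{a_i=1\}$; (4) account for the $n-1$ junctions between twist regions and the $2$ caps at the top and bottom, showing that after the two ends are handled for free, each of the remaining $n-3$ junctions forces $2$ extra true vertices, and finally (5) run a collapsibility check (e.g. via Matveev's criterion that a special polyhedron with a cell structure whose $2$-cells are disks and whose complement in $S^3$ after removing a neighborhood of the singular set is a union of balls, collapses onto a spine of the correct manifold) to confirm $P$ is genuinely a spine of $S^3\setminus\mathrm{int}N(K(p,q))$. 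Summing, $c\leqslant\sum a_i + 2(n-3) - \sharp\{a_i=1\}$.

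The hard part will be step (2)–(4): making the local-to-global gluing precise, i.e. verifying that the gadgets fit together into a genuine special spine (every true vertex has link $K_4$, every non-singular point has the right local model) and that no hidden true vertices are created at the seams, particularly at the $a_i=1$ junctions where the ``absorption'' happens. This requires a careful bookkeeping of how the $2$-cells of adjacent gadgets are identified and a proof that the resulting complement of $P$ in the ambient handlebody is a single ball (or the correct collection of balls), so that $P$ actually collapses to $M$ rather than to some other manifold. I would organize this by treating $K(p,q)$ as the boundary of a plumbing of twisted bands and handling one band at a time by induction on $n$, with the extremal cases $a_1,a_n>1$ providing the base of the induction and explaining why the correction is $2(n-3)$ and not $2(n-1)$. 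A sanity check against known small cases — e.g. the figure-eight knot $C(2,2)$, where the formula gives $4+2(-1)-0=2$, matching $c(S^3\setminus\mathrm{int}N(4_1))=2$, and the Whitehead link and other $[2,1,\ldots,1,2]$ examples flagged in the abstract — should be carried out to confirm the constants.
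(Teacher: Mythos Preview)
Your high-level strategy --- build an explicit almost-simple spine from the continued-fraction diagram and count true vertices --- is exactly what the paper does, and your sanity check on $C(2,2)$ is correct. But the way you propose to organise the construction and the accounting differs from the paper in ways that matter, and at present your sketch does not actually carry out the hard steps you yourself flag.

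The paper does \emph{not} use a per-crossing gadget. Its building block is a ``pillowcase'' $A_i$: a four-holed sphere with a disk $D_i$ twisted $a_i$ times inside, one for each twist region $T_i$. These are joined by tubes and extra disks, giving a spine $P$; collapsing $P$ from the free boundaries $\partial A_1$ and $\partial A_n$ yields $P_0$ with vertex counts $x_1=a_1-1$, $x_i=a_i+2$ for $2\leqslant i\leqslant n-1$, and $x_n=a_n-1$, summing to $\sum a_i+2(n-3)$. So the ``$+2$'' lives on each of the $n-2$ interior pillowcases, and the ``$-1$'' on each end --- not ``$2$ per junction with $n-3$ junctions remaining'' as you guess. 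Your arithmetic happens to agree, but your proposed mechanism (junctions costing $2$, ends free) is not the one that actually arises, and if you tried to build the crossing-gadget picture honestly you would likely find the bookkeeping does not split that way.

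More seriously, the $-\sharp\{a_i=1\}$ is \emph{not} obtained by absorbing a lone crossing into an adjacent junction. The paper starts from $P_0$ and, for each index $i$ with $a_i=1$, performs an explicit local replacement of the gluing pattern near $A_{i-1},A_i,A_{i+1}$ (two separate cases, depending on whether $a_{i-1}=1$ or $a_{i-1}>1$), then collapses again from the newly exposed free boundary. The effect on the counts is $x_{i-1}\mapsto x_{i-1}+1$, $x_i\mapsto x_i-1$, $x_{i+1}\mapsto x_{i+1}-1$, a net $-1$; one neighbouring pillowcase actually \emph{gains} a vertex. This is the genuinely delicate part of the argument, done inductively over the indices with $a_i=1$, and it is not something your ``absorption'' heuristic predicts. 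Until you specify your crossing gadget concretely and show how the $a_i=1$ savings really occur, steps (2)--(4) remain a hope rather than a proof.
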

As a corollary, we determine the complexities of an infinite sequence of two-bridge links.

\begin{cor}\label{cor2}
Let $K(p,q)$ be a two-bridge link with $p/q=[2,\underbrace{1,\ldots ,1}_{n-2},2]$, $n\geqslant 2$. Then, 
\[
c(S^3\setminus \mathrm{int}N(K(p,q)))=2n-2.
\]

\end{cor}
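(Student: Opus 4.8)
The plan is to combine the upper bound from Theorem 1.1 with a matching lower bound. For the continued fraction $p/q=[2,1,\ldots,1,2]$ with $n-2$ ones in the middle, we have $\sum_{i=1}^n a_i = 2+2+(n-2) = n+2$ and $\sharp\{a_i=1\}=n-2$, so Theorem 1.1 gives
\[
c(S^3\setminus \mathrm{int}N(K(p,q)))\leqslant (n+2)+2(n-3)-(n-2)=2n-2.
\]
Thus the entire content of the corollary is the reverse inequality $c(S^3\setminus \mathrm{int}N(K(p,q)))\geqslant 2n-2$.

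For the lower bound I would use the hyperbolic-volume estimate, which is the standard tool for lower bounds on Matveev complexity of cusped manifolds: if $M$ is hyperbolic then $c(M)\geqslant \mathrm{Vol}(M)/v_{\mathrm{tet}}$, where $v_{\mathrm{tet}}\approx 1.0149$ is the volume of the regular ideal hyperbolic tetrahedron, because any almost-simple spine with $k$ true vertices gives rise to (an ideal triangulation, or more precisely a decomposition yielding) at most $k$ ideal tetrahedra, each of volume at most $v_{\mathrm{tet}}$. The key point is that the links $K(p,q)$ with $p/q=[2,1,\ldots,1,2]$ form a well-understood family — I would first identify them explicitly (these should be among the chain links or twist-type links whose complements decompose into regular ideal tetrahedra or regular ideal octahedra), and then invoke the known value of their hyperbolic volume. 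If the complement of $K(p,q)$ decomposes into $2n-2$ regular ideal tetrahedra, then $\mathrm{Vol} = (2n-2)v_{\mathrm{tet}}$ and the volume bound immediately yields $c\geqslant 2n-2$, closing the gap. Alternatively, one can cite the lower bound for two-bridge link complements already referenced from \cite{PV} and check that for this family it evaluates to $2n-2$.

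The main obstacle is establishing that the lower bound is exactly $2n-2$ rather than something smaller: the naive volume bound $\mathrm{Vol}(M)/v_{\mathrm{tet}}$ is not in general an integer and, even after rounding up, need not reach $2n-2$ unless the canonical (Epstein–Penner) decomposition of $M$ is genuinely by $2n-2$ regular ideal tetrahedra. So the heart of the argument is a precise volume computation for this family: I would compute the continued fraction expansions to recognize $K(p,q)$ (for small $n$ these are the Whitehead link and its relatives), determine the canonical cell decomposition, and verify $\mathrm{Vol}(S^3\setminus\mathrm{int}N(K(p,q))) = (2n-2)\,v_{\mathrm{tet}}$ — or, failing exact regularity, show directly that no ideal triangulation with fewer than $2n-2$ tetrahedra can carry a complete hyperbolic structure of the required volume. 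Once the volume equals $(2n-2)v_{\mathrm{tet}}$, the inequality $c\geqslant \lceil \mathrm{Vol}/v_{\mathrm{tet}}\rceil = 2n-2$ is immediate, and together with Theorem 1.1 this pins down $c = 2n-2$.
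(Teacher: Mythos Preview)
Your overall strategy---upper bound from Theorem~1.1 plus a hyperbolic-volume lower bound---is exactly the paper's. But you are making the lower-bound step harder than it needs to be, and your primary plan has a likely gap.

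The paper simply invokes the Petronio--Vesnin estimate (the result from \cite{PV} you mention only as a fallback): for any hyperbolic two-bridge link with an $n$-term positive continued fraction,
\[
\mathrm{vol}\bigl(S^3\setminus\mathrm{int}N(K(p,q))\bigr)\ \geqslant\ v_3\cdot\max\{2,\ 2n-2.6667\ldots\}.
\]
Together with $c=\sigma_{\mathrm{ideal}}\geqslant \mathrm{vol}/v_3$ (the equality $c=\sigma_{\mathrm{ideal}}$ requires Matveev's theorem that an almost-simple spine can be promoted to a special one without adding true vertices---your parenthetical remark glosses over this, and it is worth stating), one gets $c\geqslant 2n-2.6667$, hence $c\geqslant 2n-2$ by integrality. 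That is the whole argument.

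Your worry that ``even after rounding up, [the bound] need not reach $2n-2$'' is therefore unfounded: the \cite{PV} bound depends only on $n$, and $\lceil 2n-2.6667\rceil=2n-2$ on the nose. There is no need to identify the links explicitly, compute canonical decompositions, or verify regularity. In fact your primary route---showing $\mathrm{vol}=(2n-2)\,v_3$ exactly---would require every tetrahedron in the canonical decomposition to be regular ideal; this holds for $n=2$ (the figure-eight) but is not something you can expect for general $n$, so that line of attack would likely fail, whereas the coarser \cite{PV} inequality finishes the proof in one line.
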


As an application of Theorem \ref{thm1}, we obtain the following upper bounds on the
complexity of meridian-cyclic $d$-fold branched coverings of $S^3$, which is sharper than the upper bounds given in \cite{PV}.

\begin{cor}\label{cor3}
Let $M_d(K(p,q))$ be the meridian-cyclic branched covering of $S^3$ along $K(p,q)$ of degree $d$. Then,
\[
c(M_d(K(p,q)))\leqslant d\Bigl( \sum_{i=1}^n a_i +2(n-3)-\sharp \{ a_i=1\}\Bigr)+rd,
\] 
where, $r$ is one if $p$ is odd and three if $p$ is even.

\end{cor}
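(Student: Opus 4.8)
The strategy is to lift the spine constructed in the proof of Theorem 1.1 to the branched covering, then count the resulting true vertices and correct for the extra structure coming from the branch locus. Let $P\subset S^3\setminus\mathrm{int}N(K(p,q))$ be the almost-simple spine realizing the bound of Theorem 1.1, so $P$ has exactly $\sum a_i+2(n-3)-\sharp\{a_i=1\}$ true vertices. The meridian-cyclic $d$-fold covering $M_d(K(p,q))\to S^3$ restricts, over the link exterior, to a $d$-fold covering $\widetilde{E}\to E:=S^3\setminus\mathrm{int}N(K(p,q))$, and $M_d$ is obtained from $\widetilde E$ by Dehn filling the boundary tori along the lifts of the meridians (this is exactly the branched-covering construction). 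First I would pull back $P$ to a spine $\widetilde P=\pi^{-1}(P)$ of $\widetilde E$: since $\pi$ is a covering map, the link of every point of $\widetilde P$ is homeomorphic to the link of its image, so $\widetilde P$ is again almost-simple, and its true vertices are exactly the preimages of true vertices of $P$, hence there are $d\bigl(\sum a_i+2(n-3)-\sharp\{a_i=1\}\bigr)$ of them. A collapse of $E$ onto $P$ lifts to a collapse of $\widetilde E$ onto $\widetilde P$, so $\widetilde P$ is indeed a spine.

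Next I would pass from the spine $\widetilde P$ of the exterior $\widetilde E$ to a spine of the closed manifold $M_d$. The closed manifold is obtained by gluing solid tori to the boundary components of $\widetilde E$ along the meridian curves; equivalently, a spine of $M_d$ is obtained from $\widetilde P$ by attaching, for each boundary torus, a $2$-cell along an appropriate curve and then further collapsing/adjusting so that the link condition is preserved, which introduces a controlled number of new true vertices per filled torus. Here I would invoke the standard fact (used e.g. in \cite{Fominykh} and \cite{PV}) that capping off one torus boundary component of an almost-simple spine to produce a spine of the Dehn-filled manifold costs at most a bounded number of true vertices — in the normalization relevant here, the bookkeeping is arranged so that the total extra cost over all new boundary components is $rd$, with $r=1$ when $p$ is odd (the link $K(p,q)$ is a knot, so $\widetilde E$ has a single boundary torus, and $\widetilde{} $ being connected the filling contributes once per the $d$ sheets in the knot case... ) and $r=3$ when $p$ is even (the link has two components, giving more boundary tori to fill). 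The precise constants $1$ and $3$ should come out of the same explicit local picture near the bridge sphere that is used to build $P$ in Theorem 1.1.

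The main obstacle, and the step requiring genuine care rather than routine lifting, is the last one: controlling exactly how many true vertices are created when one caps off the boundary tori of $\widetilde P$ to obtain a spine of the closed branched cover, and verifying that this cost is precisely $rd$ rather than merely $O(d)$. This requires understanding the geometry of the lifted meridian curves on $\partial\widetilde E$ relative to the cell structure that $\widetilde P$ induces on the boundary, and checking that the local modification (inserting a disk and resolving the resulting non-simple points) can be done with the claimed economy; the dependence on the parity of $p$ enters because it governs the number of boundary tori of $E$ (one if $p$ odd, two if $p$ even) and hence of $\widetilde E$. Everything else — the additivity of the true-vertex count under the covering and the fact that almost-simplicity and the spine property are preserved under lifting — is formal. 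Once the per-torus capping cost is pinned down, the stated inequality follows by adding $d$ times the Theorem 1.1 bound to the total capping cost $rd$.
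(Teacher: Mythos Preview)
Your plan is the same strategy the paper uses: lift the spine $P'$ of Theorem~1.1 to the $d$-fold meridian-cyclic cover of the exterior, then cap off the boundary tori by attaching meridian disks and count the extra true vertices. The lifting step is exactly as you describe, and your identification of the parity of $p$ with the number of link components (hence boundary tori) is correct.

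Where your proposal stops short is precisely the point you flag as the ``main obstacle'': you do not explain \emph{why} the capping cost is $d$ per torus for one component and $2d$ for the other. The paper's argument here is short and concrete, and you should supply it rather than defer to a ``standard fact'' or to \cite{PV}. Because $a_1>1$, one of the tubes joining the pillowcases $A_1$ and $A_2$ has the property that a meridian disk $D$ of that tube meets the $1$-skeleton of $P'$ in a single point; hence $P'\cup D$ has exactly one new true vertex on $\partial D$. Lifting, the preimage of $\partial D$ meets $P_d$ in $d$ points, so attaching the lifted disk costs $d$ true vertices. When $p$ is even one must also cap the second boundary torus; the analogous meridian disk $D'$ can be placed so that $P'\cup D\cup D'$ has two true vertices on $\partial D'$, contributing $2d$ upstairs, for a total of $3d$. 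This is the entire content behind the constants $r=1$ and $r=3$; no abstract ``per-torus filling cost'' lemma is needed or invoked.

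One further point you omit: in the link case, after attaching both meridian disks you obtain a spine of $M_d(K(p,q))$ minus \emph{two} open balls, not one. You need the fact (from \cite{Matveev}) that removing several open balls does not change the complexity to conclude. Also, your parenthetical about the knot case (``the filling contributes once per the $d$ sheets'') is garbled; the mechanism is simply that the single meridian curve $\partial D$ lifts to a single curve meeting $P_d$ in $d$ points.
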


\vspace{2mm}

We are deeply grateful to Yuya Koda for valuable information and instructive discussion on the results. The first author is supported by the Grant-in-Aid for Scientific Research (C), JSPS KAKENHI Grant Number 25400078.

\section{Proof of Theorem \ref{thm1}}

We will show the upper bound of the complexity in Theorem \ref{thm1} by constructing an almost-simple spine of the two-bridge link complement explicitly.

Let $K(p,q)$ be a two-bridge link in the 3-sphere $S^3$, where $p,q$ are coprime integers with $p\geqslant 2$ and $q > 0$. We may represent it as $C(a_1,\ldots ,a_n)$, where the integers $a_i$ are the partial quotients of a regular continued fraction of $p/q$. For each $i=1,\ldots, n$, let $T_i$ be the tangle containing the $a_i$ twists in Figure \ref{fig1}, which is a 3-ball with two tunnels mutually twisted $a_i$-times. For each $i=1,\ldots ,n$, let $A_i$ be the union of a sphere with four holes and a disk which is placed inside the sphere twisted $a_i$-times as shown in Figure \ref{fig4}. Each $A_i$ is a spine of $T_i$ obtained by collapsing it from $\partial N(K(p,q))$ with keeping the holed sphere on the boundary of $T_i$. In this paper, we call $A_i$ a {\it pillowcase}. We denote the disk lying in the middle of $A_i$ by $D_i$. The pillowcase $A_i$ has four boundary components. We denote these components by $\partial A_i^{{\rm NW}}$, $\partial A_i^{{\rm NE}}$, $\partial A_i^{{\rm SW}}$, $\partial A_i^{{\rm SE}}$, see Figure \ref{fig4}. We will make a spine of $S^3\setminus K(p,q)$ by gluing these pillowcases.

\begin{figure}
\begin{center}
\hspace{1.5cm}
\includegraphics[scale=0.5]{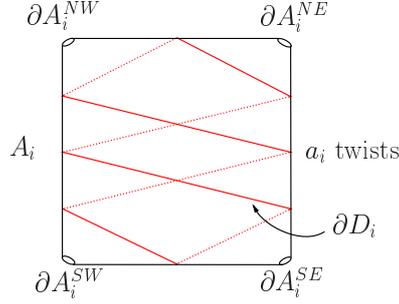}
\caption{Pillowcase $A_i$ and its boundary components.}
\label{fig4}
\end{center}
\end{figure}

We first prove the following weaker version of Theorem \ref{thm1}.

\begin{lem}\label{lem1}
Let $C(a_1,\ldots ,a_n)$ be a two-bridge link, where each $a_i$ is positive and $a_1, \ a_n >1$. Then, 
\[
c(S^3\setminus \mathrm{int}N(C(a_1,\ldots ,a_n)))\leqslant \sum_{i=1}^n a_i +2(n-3).
\]
\end{lem}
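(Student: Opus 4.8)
The strategy is to build an explicit almost-simple spine of $S^3\setminus \mathrm{int}N(C(a_1,\ldots,a_n))$ by gluing together the $n$ pillowcases $A_1,\ldots,A_n$ along their boundary circles in the pattern dictated by Figure \ref{fig1}, and then to count true vertices. First I would fix, for each $i$, a standard cell decomposition of the pillowcase $A_i$: the four boundary arcs $\partial A_i^{\mathrm{NW}},\partial A_i^{\mathrm{NE}},\partial A_i^{\mathrm{SW}},\partial A_i^{\mathrm{SE}}$, the holed sphere part, and the internal disk $D_i$ which meets the holed sphere along a circle that winds $a_i$ times. The key local computation is to determine how many true vertices a single pillowcase $A_i$ contributes once it sits inside the glued-up polyhedron — I expect this to be $a_i$ (each full twist of $D_i$ against the sphere produces one point whose link is $K_4$), plus a bounded correction coming from how the four boundary circles are identified with the boundary circles of the neighbouring pillowcases $A_{i-1}$ and $A_{i+1}$. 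Summing $\sum a_i$ over all $i$ and adding the gluing corrections, which should total $2(n-3)$ after accounting for the two ``end'' tangles $T_1$ and $T_n$ behaving specially (this is where the $-3$ enters, analogous to the $S(p,q)-3$ phenomenon for lens spaces), yields the claimed bound.

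Concretely, the steps are: (1) verify that each $A_i$ is indeed a spine of the tangle ball $T_i$ rel the holed sphere on $\partial T_i$ — this is essentially asserted in the text and follows from collapsing $T_i$ from $\partial N(K(p,q))$; (2) describe the gluing map: the east holes of $A_i$ are glued to the west holes of $A_{i+1}$, matching $\partial A_i^{\mathrm{NE}}\leftrightarrow \partial A_{i+1}^{\mathrm{NW}}$ and $\partial A_i^{\mathrm{SE}}\leftrightarrow \partial A_{i+1}^{\mathrm{SW}}$, while at the two ends the remaining holes get capped off or identified in the way forced by the plat closure in Figure \ref{fig1}; (3) check that the resulting polyhedron $P$ is almost-simple, i.e. the link of every point embeds in $K_4$ — away from finitely many points the link is a circle, a circle with a diameter (a ``theta-curve''-free $1$-complex), or a point, and at the special points it is exactly $K_4$; (4) check that $S^3\setminus \mathrm{int}N(C(a_1,\ldots,a_n))$ collapses onto $P$, by exhibiting the collapse tangle-by-tangle and checking the collapses are compatible across the gluing circles; (5) count: $a_i$ true vertices from the twisting region of $A_i$, and track the extra vertices created along the $n-1$ gluing circles and the two end-closures, arriving at $\sum a_i + 2(n-3)$.

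The main obstacle will be step (5) together with the interplay between (3) and (4): getting the bookkeeping of true vertices exactly right at the interfaces between consecutive pillowcases, and simultaneously ensuring that the natural ``obvious'' spine one writes down is genuinely almost-simple rather than having some point with a worse link (e.g. a vertex of valence $\geqslant 5$ in its link). In particular the two end tangles $T_1,T_n$ with $a_1,a_n>1$ must be handled by hand — the closure at the ends is where one either saves or spends vertices, and pinning down that this contributes exactly $-6$ relative to the naive count $\sum a_i + 2(n-1)$ is the crux. A secondary subtlety is confirming that when some interior $a_i$ is small (the hypothesis here only requires $a_i\geqslant 1$), the local model still degenerates correctly; this is precisely the point that Lemma \ref{lem1} does \emph{not} yet optimize — the sharper term $-\sharp\{a_i=1\}$ in Theorem \ref{thm1} comes from a further local simplification of the spine at each $a_i=1$ tangle, which is deferred.
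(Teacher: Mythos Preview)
Your overall strategy --- assemble a spine from the pillowcases $A_i$ and count true vertices --- is exactly the paper's, but two ingredients in your sketch are off in ways that matter for the arithmetic.

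First, the gluing in step (2) is not a direct identification of boundary circles. In the paper the adjacent pillowcases $A_i$ and $A_{i+1}$ are joined by attaching two \emph{tubes} (pieces of $\partial N(K)$ carrying the two strands between the tangles) and then gluing a disk into the planar region bounded by the tubes and the two holed spheres; at the last pillowcase $A_n$ one attaches three tubes and two disks to realise the plat closure. Simply identifying $\partial A_i^{\mathrm{NE}}$ with $\partial A_{i+1}^{\mathrm{NW}}$ does not model the topology correctly and will not produce an almost-simple polyhedron onto which the exterior collapses. Second, the initial glued-up polyhedron $P$ has more true vertices than the target; the paper then performs a further collapse of $P$ from the free boundaries $\partial A_1$ and $\partial A_n$ to obtain a smaller spine $P_0$, and it is this secondary collapse that kills two true vertices at each end. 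With these two corrections the bookkeeping becomes: $x_1=a_1-1$, $x_i=a_i+2$ for $2\leqslant i\leqslant n-1$, $x_n=a_n-1$, summing to $\sum a_i+2(n-3)$. (Your ``naive count $\sum a_i+2(n-1)$ minus $6$'' gives $\sum a_i+2(n-4)$, so your heuristic for where the $+2$'s live is slightly misplaced: they attach to the interior \emph{pillowcases}, not the interfaces.)
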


\begin{proof}
We first construct a spine $P$ of $S^3\setminus C(a_1,\ldots ,a_n)$ by gluing the pillowcases $A_1,\ldots ,A_n$ together as follows:
\begin{itemize}
\item For each adjacent pair of pillowcases $A_i$ and $A_j$, $1\leqslant i,j\leqslant n-1$,
we attach two tubes in order to pass the knot strands and then glue a disk for each region bounded by tubes and pillowcases as shown in Figure \ref{fig5}.
\item We attach three tubes and two disks to the pillowcase $A_n$ as shown in Figure \ref{fig6}. 
\end{itemize}
\begin{figure}[htbp]
\begin{center}
\includegraphics[scale=0.8]{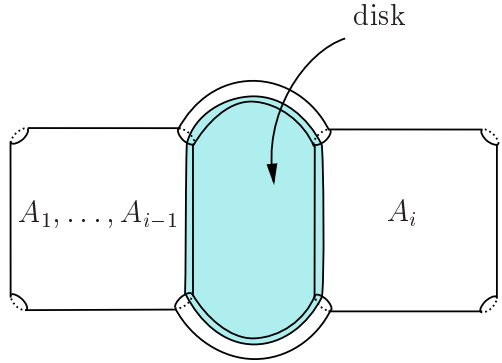}
\caption{Gluing a pair of adjacent pillowcases. }
\label{fig5}
\end{center}
\end{figure}

\begin{figure}[htbp]
\begin{center}
\includegraphics[scale=0.8]{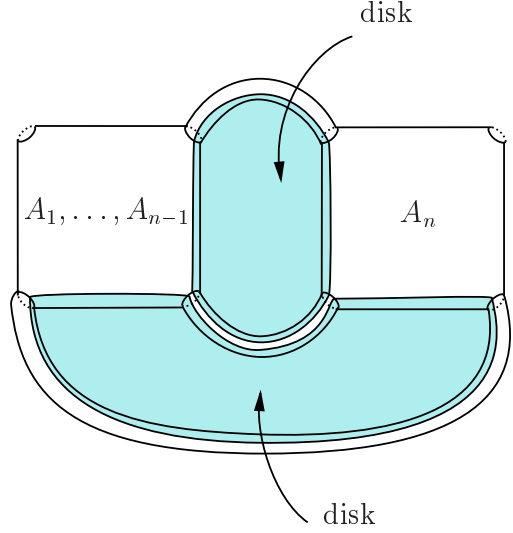}
\caption{Gluing $A_{n-1}$ and $A_n$.}
\label{fig6}
\end{center}
\end{figure}

Next, we make a spine $P_0$ by collapsing $P$ from $\partial A_1$ and $\partial A_n$, which decreases the number of true vertices. Let $x_i$ be the number of true vertices on $\partial D_i$ in the spine $P_0$. Since any true vertex of $P$ lies on $\partial D_i$ for some $1\leqslant i \leqslant n$, there exists $\sum_{i=1}^n x_i $ true vertices on $P_0$.

We calculate the number of true vertices in the spine $P_0$.

\begin{itemize}
\item For the pillowcase $A_1$.\\
True vertices which lie on $\partial D_1$ are $y_1^{(1)},\ldots ,y_{a_{1}}^{(1)},y_{a_{1}+1}^{(1)}$ shown in Figure \ref{fig7}. Since the true vertices $y_{1}^{(1)},y_{2}^{(1)}$ are removed by the collapsing from $\partial A_{1}^{\text{NW}}$, we get $x_{1}=a_{1}-1$. 

\begin{figure}[htbp]
\begin{center}
\includegraphics[scale=1]{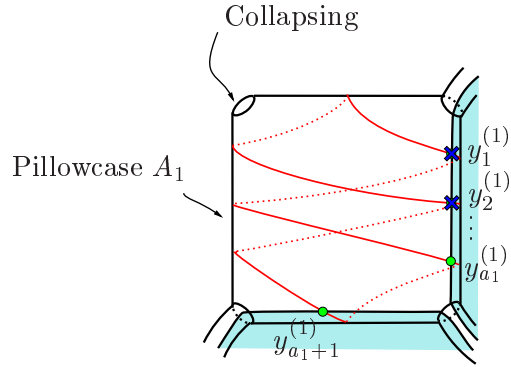}
\caption{True vertices in the pillowcase $A_1$.}
\label{fig7}
\end{center}
\end{figure}

\item For pillowcases $A_i$, where $i=2,\ldots , n-1$.\\
True vertices which lie on $\partial D_i$ are $y_1^{(i)},\ldots ,y_{a_{i}}^{(i)},y_{a_{i}+1}^{(i)},y_{a_{i}+2}^{(i)}$ shown in Figure \ref{fig8}. Therefore, we get $x_{i}=a_{i}+2$.
\begin{figure}[htbp]
\begin{center}
\includegraphics[scale=1]{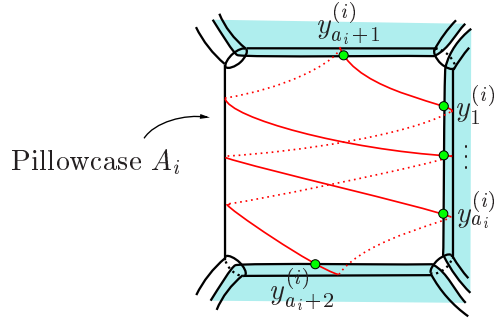}
\caption{True vertices in the pillowcase $A_i$.}
\label{fig8}
\end{center}
\end{figure}

\item For the pillowcase $A_n$.\\
True vertices which lie on $\partial D_n$ are $y_1^{(n)},\ldots ,y_{a_{n}}^{(n)},y_{a_{n}+1}^{(n)}$ shown in Figure \ref{fig9}. Since the true vertices $y_{a_{n}-1}^{(n)},y_{a_{n}}^{(n)}$ are removed by the collapsing from $\partial A_{n}^{\text{SW}}$, we get $x_{n}=a_{n}-1$. 

\begin{figure}[htbp]
\begin{center}
\hspace{0.5cm}
\includegraphics[scale=1]{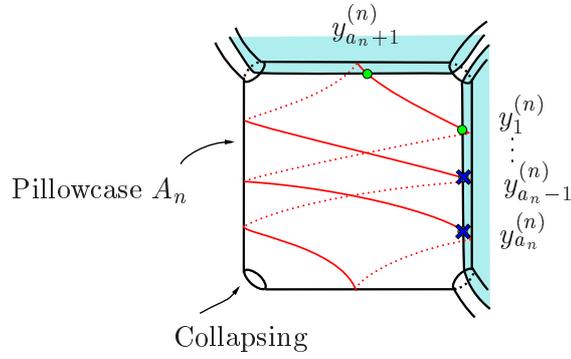}
\caption{True vertices in the pillowcase $A_n$.}
\label{fig9}
\end{center}
\end{figure}
\end{itemize}

\noindent In summary, the number of true vertices in each pillowcase is
\[
\begin{cases}
\begin{split}
x_{1}&= a_{1}-1\\
x_{i}&=a_{i}+2 \;\;\;(i=2,\ldots ,n-1) \\
x_{n}&=a_{n}-1.
\end{split}
\end{cases}
\]
Therefore, the number of true vertices in the spine $P_0$ is
\[
\begin{split}
\displaystyle\sum_{i=1}^n x_i &= a_1-1+\sum_{i=2}^{n-1}(a_i+2)+a_n-1\\
&= \sum_{i=1}^n a_i +2(n-3).
\end{split}
\]
\end{proof}

\begin{proof}[Proof of Theorem \ref{thm1}]
We will make a new spine $P'$ from $P_0$ constucted in Lemma \ref{lem1} by collapsing it as follows (An example of $P'$ is given in Example \ref{ex} below) : Let $1\leqslant i_1<i_2<\cdots <i_r\leqslant n$ be the set of indices with $a_{i_{j}}=1$, $j=0,\ldots ,r-1$. For each $j$, let $P_{j+1}$ be a spine obtained from $P_j$ by applying the replacement shown in Figure \ref{fig11}. The left figure represents the replacement in the case of $a_{i_j-1}>1$ and the right one is in the case of $a_{i_j-1}=1$. Applying this replacement inductively for $j=0,\ldots ,r-1$, we get a new spine $P_r$ of $S^3\setminus C(a_1,\ldots ,a_n)$.

\begin{figure}[htbp]
 \begin{minipage}{0.49\hsize}
  \begin{center}
  \includegraphics[scale=0.6]{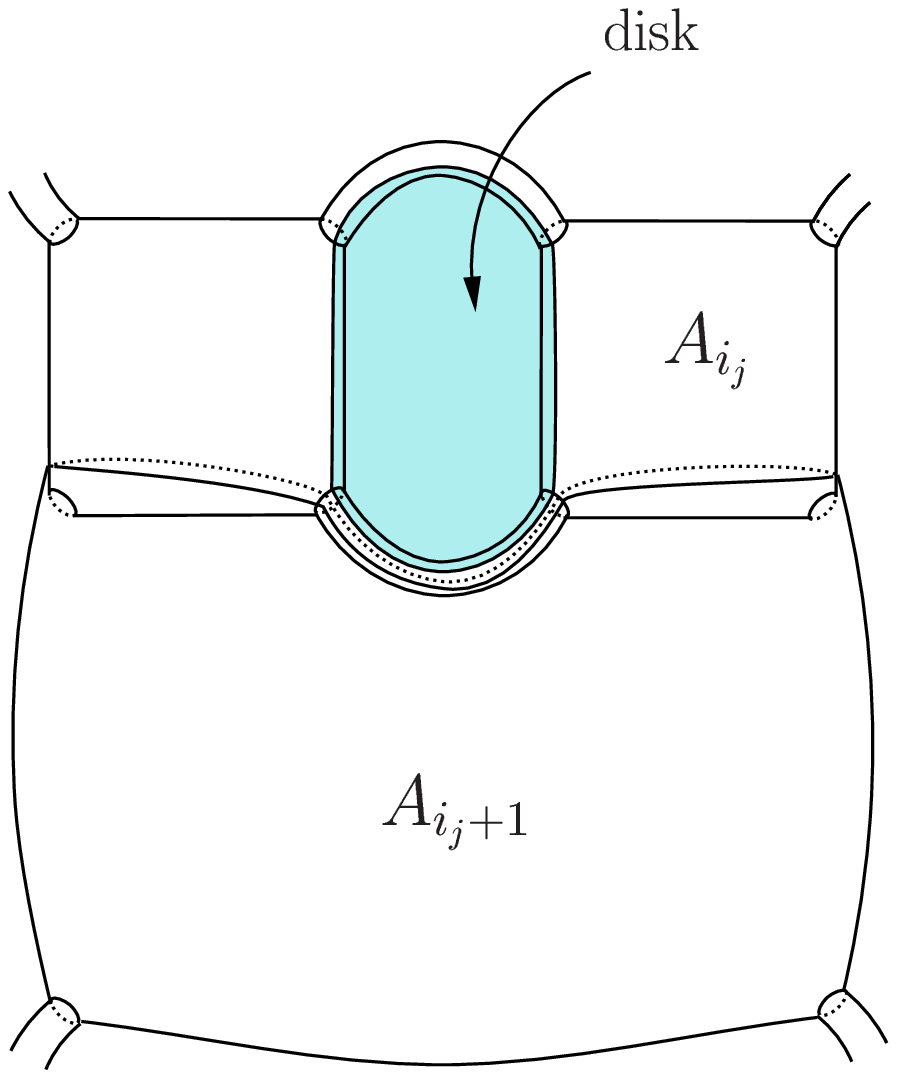}
  \end{center} 
\end{minipage}
 \begin{minipage}{0.49\hsize}
\begin{center}
  \includegraphics[scale=0.6]{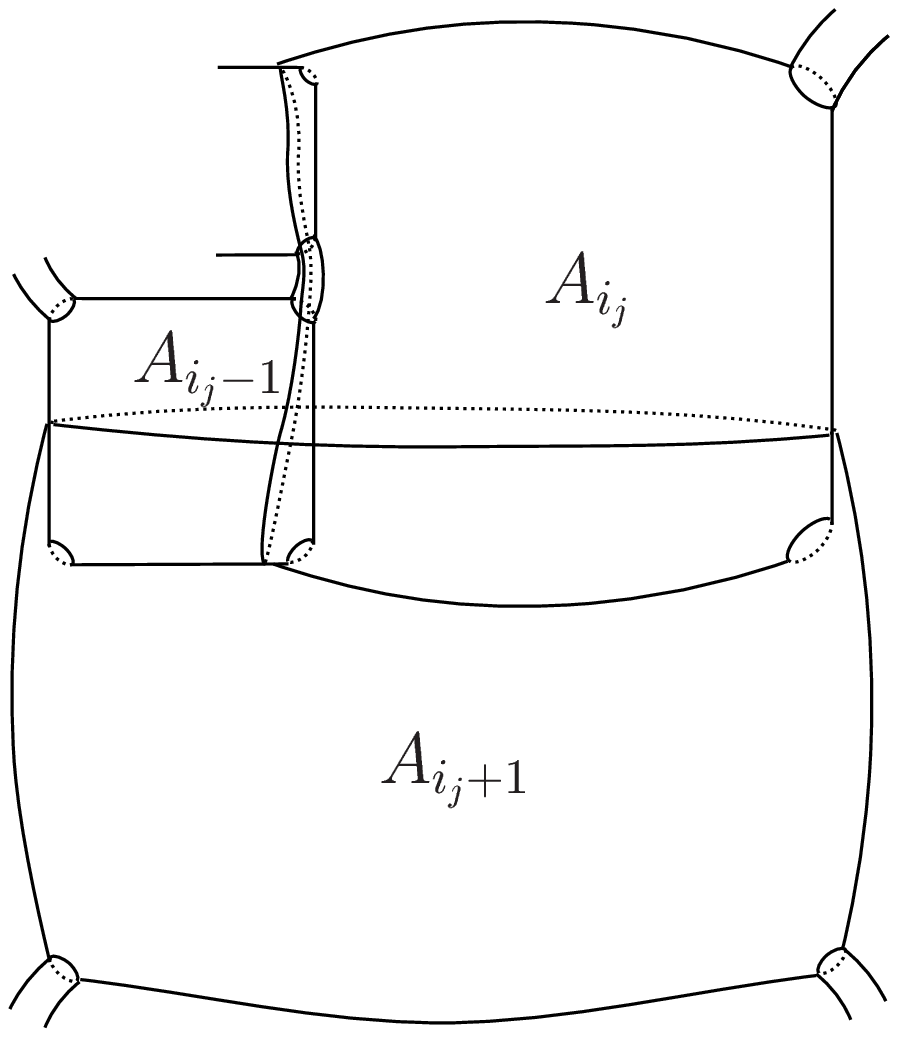}
\end{center}     
\end{minipage}
  \caption{Gluing pillowcases together in the case of $a_{i_j-1}>1$ (shown in the left hand side) and $a_{i_j-1}=1$ (shown in the right hand side).}\label{fig11}
\end{figure}

Let $P'_j$ be the spine obtained from $P_j$ by collapsing it from the boundary. In the following we fix $j$, $0\leqslant j\leqslant r$, and denote $i_j$ by $i$ for simplicity.  Let $x_k^{(j)}$ be the number of true vertices on $\partial D_k$ in the spine $P'_j$, where $k=1,\ldots ,n$. All true vertices of $P_j$ which do not lie on $\partial D_k$ are removed in $P'_j$ by the collapsing. Therefore, there exists $\sum_{k=1}^n x_k^{(j)} $ true vertices in $P'_j$.

We now calculate the difference of the numbers of true vertices between $P'_j$ and $P'_{j+1}$.

\begin{itemize}
\item[(i)] In the case of $a_{i-1}>1$.
	
	\item For the pillowcase $A_{i-1}$.\\
	True vertices which lie on $\partial D_{i-1}$ in $P'_j$ are $y_1^{(i-1)},\ldots ,y_{a_{i-1}}^{(i-1)},y_{a_{i-1}+1}^{(i-1)},y_{a_{i-1}+2}^{(i-1)}$ shown on the left hand side in Figure \ref{fig77}. Hence, we get $x_{i-1}^{(j)}=a_{i-1}+2$. On the other hand, in $P'_{j+1}$, true vertices which lie on $\partial D_{i-1}$  are $z_1^{(i-1)},\ldots ,z_{a_{i-1}}^{(i-1)},z_{a_{i-1}+1}^{(i-1)},z_{a_{i-1}+2}^{(i-1)},z_{a_{i-1}+3}^{(i-1)}$ shown on the right hand side, and hence $x_{i-1}^{(j+1)}=a_{i-1}+3$. Thus $x_{i-1}^{(j+1)}=x_{i-1}^{(j)}+1$. 

	\item For the pillowcase $A_i$.\\
	True vertices which lie on $\partial D_i$ in $P'_j$ are $y_{a_i}^{(i)},y_{a_i+1}^{(i)},y_{a_i+2}^{(i)}$ shown on the left hand side in Figure \ref{fig77}, and hence we get $x_i^{(j)}=a_i+2=3$. On the other hand, true vertices which lie on $\partial D_i$ in the $P_{j+1}$ are $z_1^{(i)},\ldots ,z_4^{(i)}$ shown on the right hand side. By collapsing from $\partial A_i^{\text{SE}}$, the true vertices $z_1^{(i)},z_2^{(i)}$ are removed in $P'_{j+1}$. Hence $x_i^{(j+1)}=2$. Therefore we get $x_i^{(j+1)}=x_i^{(j)}-1$. 

	\item For the pillowcase $A_{i+1}$.\\
	True vertices which lie on $\partial D_{i+1}$ in $P'_j$ are 
$y_1^{(i+1)},\ldots  ,y_{a_{i+1}}^{(i+1)},y_{a_{i+1}+1}^{(i+1)},y_{a_{i+1}+2}^{(i+1)}$ 
shown on the left hand side in Figure \ref{fig77}, and hence we get $x_{i+1}^{(j)}=a_{i+1}+2$. 
On the other hand, true vertices which lie on $\partial D_{i+1}$ in $P_{j+1}$ are 
$z_1^{(i+1)},\ldots ,z_{a_{i+1}}^{(i+1)},z_{a_{i+1}+1}^{(i+1)}, z_{a_{i+1}+2}^{(i+1)},z_{a_{i+1}+3}^{(i+1)}$ 
shown on the right hand side. By collapsing from $\partial A_{i-1}^{\text{SW}}$, 
the true vertices $z_{a_{i+1}+2}^{(i+1)},z_{a_{i+1}+3}^{(i+1)}$ are removed in $P'_{j+1}$. 
Hence $x_{i+1}^{(j+1)}=a_{i+1}+1$. Thus we get $x_{i+1}^{(j+1)}=x_{i+1}^{(j)}-1$. 

	\item For the other pillowcases $A_k$, where $k\neq i-1,i,i+1$.\\
	Since the replacement $P_j\to P_{j+1}$ does not change the true vertices in $A_k$, we get $x_k^{(j+1)}=x_k^{(j)}$.

\begin{figure}[htbp]
 \begin{minipage}{0.49\hsize}
  \begin{center}
   \includegraphics[scale=0.9]{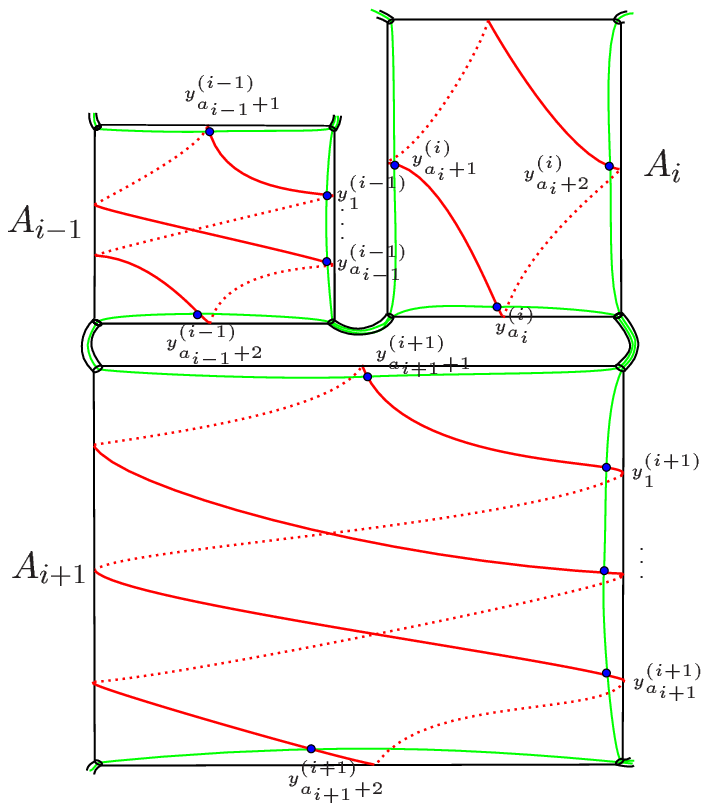}\label{fig76}
  \end{center}
 \end{minipage}
 \begin{minipage}{0.49\hsize}
  \begin{center}
   \includegraphics[scale=0.9]{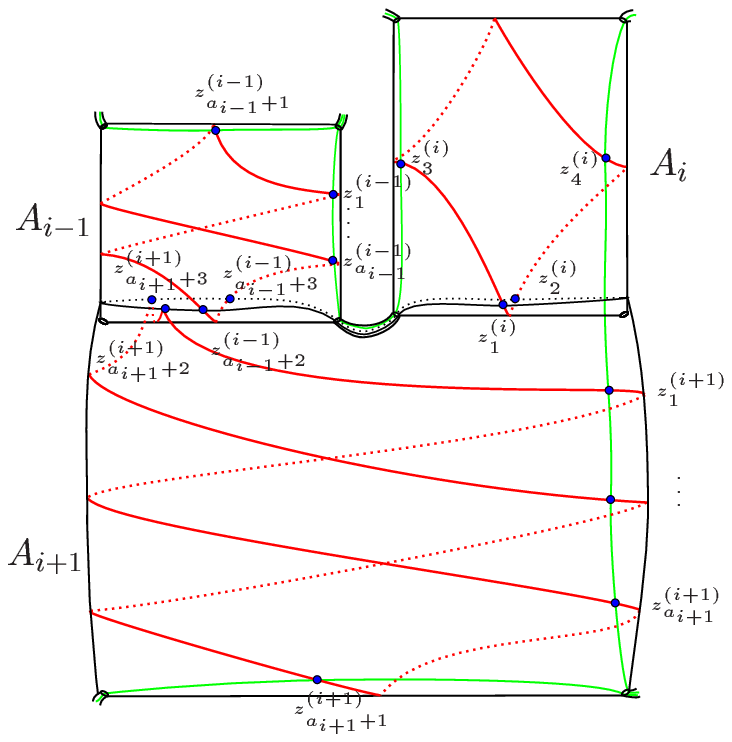}\label{fig77}
  \end{center}
  \end{minipage}
  \caption{True vertices in the spine $P_j$  (shown in the left hand side) and in the spine $P_{j+1}$ (shown in the right hand side) in the case of $a_{i-1}>1$.}
  \label{fig77}
\end{figure}

\item[(ii)] In the case of $a_{i-1}=1$.
	
	\item For the pillowcase $A_{i-1}$.\\
	By the replacement $P_j\to P_{j+1}$, the true vertex $z^{(i-1)}$ shown in Figure \ref{fig79} appears and this is not removed in $P'_{j+1}$. Therefore, we get $x_{i-1}^{(j+1)}=x_{i-1}^{(j)}+1$.  

	\item For the pillowcase $A_i$.\\
	True vertices which lie on $\partial D_i$ in $P_j$ are $y_{a_i}^{(i)},y_{a_i+1}^{(i)},y_{a_i+2}^{(i)},y_{a_i+3}^{(i)}$ shown on the left hand side in Figure \ref{fig79}. By collapsing $P_j$ from $\partial A_{i-2}^{\text{NE}}$, the true vertices $y_{a_i+2}^{(i)},y_{a_i+3}^{(i)}$ are removed in $P'_{j+1}$. Hence $x_i^{(j)}=a_i+1=2$. On the other hand, true vertices which lie on $\partial D_i$ in $P_{j+1}$ are $z_{a_i}^{(i)},\ldots ,z_{a_i+4}^{(i)}$ shown on the right hand side. By collapsing from $\partial A_{i-2}^{\text{NE}}$ and $\partial A_{i}^{\text{SE}}$, the true vertices $z_{a_i+2}^{(i)},z_{a_i+3}^{(i)}$ and $z_{a_i}^{(i)},z_{a_i+4}^{(i)}$ are removed in $P'_{j+1}$, respectively, and hence $x_i^{(j+1)}=1$. Thus we get $x_i^{(j+1)}=x_i^{(j)}-1$. 

	\item For the pillowcase $A_{i+1}$.\\
	Applying the same argument as in case (i), we get $x_{i+1}^{(j+1)}=x_{i+1}^{(j)}-1$.
	
	\item For the other pillowcases $A_k$, where $k\neq i-1,i,i+1$.\\
	Since the replacement $P_j\to P_{j+1}$ does not change the true vertices in $A_k$, we get $x_k^{(j+1)}=x_k^{(j)}$.

\begin{figure}[htbp]
 \begin{minipage}{0.49\hsize}
  \begin{center}
   \includegraphics[scale=0.9]{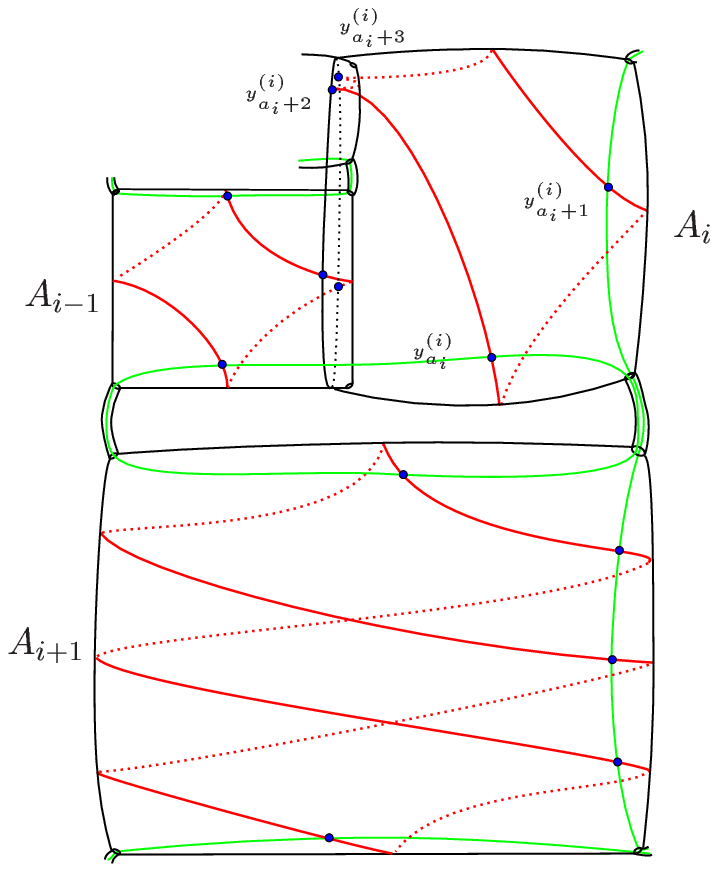}\label{fig78}
  \end{center}
   \end{minipage}
 \begin{minipage}{0.49\hsize}
  \begin{center}
   \includegraphics[scale=0.9]{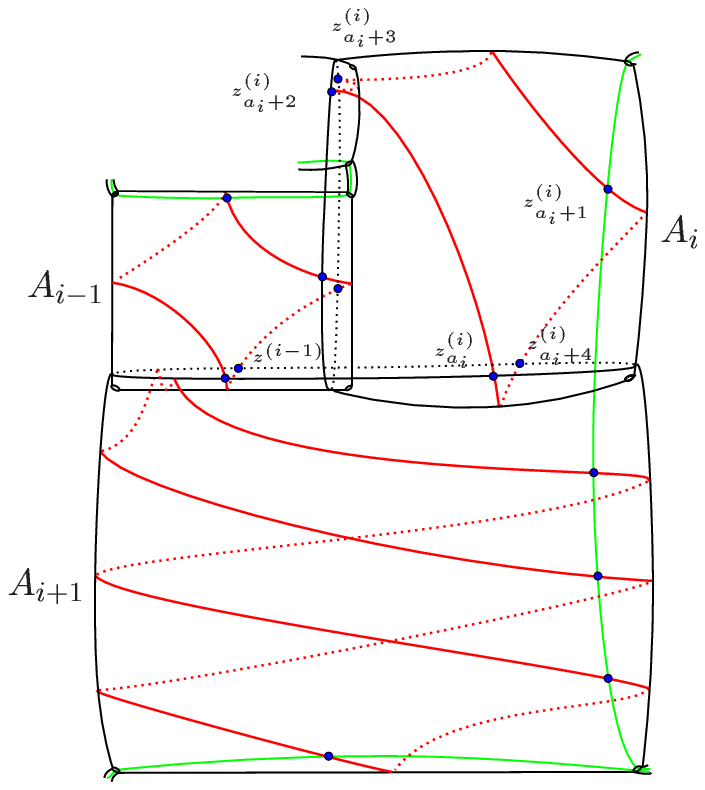}\label{fig79}
  \end{center}
  \end{minipage}
  \caption{True vertices in the spine $P_j$ (shown in the left hand side) and in the spine $P_{j+1}$ (shown in the right hand side) in the case of $a_{i-1}=1$.}
  \label{fig79}
\end{figure}

\end{itemize}

By the above arguments (i) and (ii), the number of true vertices in each pillowcase changes as
\[
\begin{cases}
x_{i-1}^{(j+1)}=x_{i-1}^{(j)}+1 \\
x_i^{(j+1)}=x_i^{(j)}-1 \\
x_{i+1}^{(j+1)}=x_{i+1}^{(j)}-1 \\
x_k^{(j+1)}=x_k^{(j)} \;\;\;( k\neq i-1,i,i+1 ).
\end{cases}
\]
Therefore, we get
\[
\sum_{k=1}^n x_k^{(j+1)} =\sum_{k=1}^n x_k^{(j)}-1.
\]
Since $P'_j$ has $\sum_{k=1}^n x_k^{(j)} $ true vertices, the replacement $P'_j\to P'_{j+1}$ decrease the number of true vertices in $P'_{j+1}$ by one. Hence, by the inductive sequence $P'_0 \to P'_1 \to \cdots \to P'_r=P'$, the number of true vertices decreases by $r$. Now we apply Lemma \ref{lem1}. The number of true vertices in the spine $P'$ is
\[
\begin{split}
&\displaystyle\sum_{i=1}^n a_i +2(n-3)-r \\
=&\displaystyle\sum_{i=1}^n a_i +2(n-3)-\sharp \{ a_i=1\}.
\end{split}
\]
This completes the proof.
\end{proof}

\begin{exmp}\label{ex}

The spine $P_r$ of $S^3\setminus {\rm int}N(C(3,2,1,3,3))$ is as shown in Figure \ref{fig12}. The spine $P'$ is obtained by collapsing this from the boundaries.
\end{exmp}

\begin{figure}
\begin{center}

\includegraphics[scale=1.0]{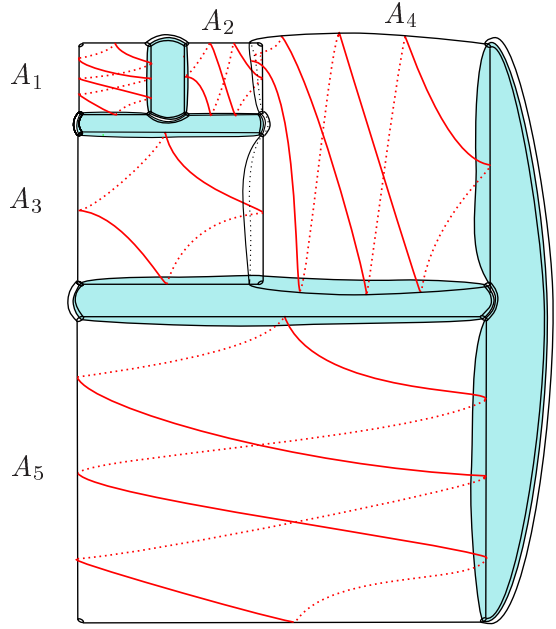}
\caption{The spine $P_r$ of $S^3\setminus {\rm int}N(C(3,2,1,3,3))$.}
\label{fig12}
\end{center}
\end{figure}

\begin{rmk}
Let $P(a_1,\ldots ,a_n)$ be a pretzel link, where $|a_i|>0$, $|a_1|,\ |a_n|>1$. We can construct a spine $P$ of the complement of $P(a_1,\ldots ,a_n)$, by attaching $2(n-1)+1$ tubes and $n$ disks to the $n$ tangles with $a_i$-twists, $i=1,\ldots ,n$ by a way similar to what we did in the proof of Lemma \ref{lem1}. Then the number of true vertices of the spine obtained from $P$ by collapsing becomes 
\[
|a_1|+ 2\sum_{i=1}^n |a_i| + |a_n|+n-4,
\]
which is an upper bound of the complexity $c(S^3\setminus P(a_1,\ldots , a_n))$. See \cite{Nemoto} for precise discussion.
\end{rmk}

\section{Proof of Corollaries \ref{cor2} and \ref{cor3}}

To prove Corollary \ref{cor2}, we need to have a lower bound on the complexity of $S^3\setminus C(a_1,\ldots ,a_n)$. Let ${\rm vol}(M)$ denote the hyperbolic volume of a hyperbolic 3-manifold $M$, and $v_3$ denote the hyperbolic volume of the regular ideal tetrahedron in the hyperbolic 3-space $\mathbb{H}^3$, that is, $v_3=1.01494\ldots$.

In order to prove Corollary \ref{cor2}, we will use the following theorems.

\begin{thm}[\cite{PV}]\label{thmPV}
Let $K(p,q)$ be a hyperbolic two-bridge link with $p/q=[a_1,\ldots ,a_n]$, $a_i>0$ and $a_1,\ a_n>1$. Then,
 
\begin{equation}\label{equ1}
{\rm vol}(S^3 \setminus {\rm int}N(K(p,q)))\geqslant v_3\cdot \max \{ 2,2n-2.6667 \dots \}.
\end{equation}
\end{thm}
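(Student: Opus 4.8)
The plan is to derive the lower bound from an explicit geometric ideal triangulation of $S^3\setminus \mathrm{int}N(K(p,q))$ together with the volume-maximization principle for angle structures. By work of Sakuma--Weeks and Gu\'eritaud, a hyperbolic two-bridge link complement admits a canonical decomposition into ideal tetrahedra whose combinatorics are dictated by the continued fraction $[a_1,\ldots,a_n]$; moreover this triangulation is geometric, i.e.\ it realizes the complete hyperbolic structure and hence carries angle structures. The tetrahedra are organized in blocks, roughly one block of $a_i$ tetrahedra per partial quotient, the blocks being joined by ``hinge'' tetrahedra at the $n-1$ transitions between consecutive twist regions. I would first record the exact combinatorial count and, crucially, isolate the $\sim 2(n-1)$ transitional tetrahedra from the tetrahedra that merely fill out a long twist region, since only the former will be made to carry volume close to the maximum.

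The key reduction is the Casson--Rivin principle (as exploited by Rivin and by Futer--Gu\'eritaud): on a triangulation admitting angle structures the total volume functional $\mathcal V$ is strictly concave and is maximized, over the polytope of all angle structures, exactly at the complete structure. Consequently $\mathrm{vol}(S^3\setminus\mathrm{int}N(K(p,q)))=\max_{\mathbf a}\mathcal V(\mathbf a)\geqslant \mathcal V(\mathbf a_0)$ for any single angle structure $\mathbf a_0$ I choose to write down. This converts the problem into the purely combinatorial task of selecting one good $\mathbf a_0$ and evaluating $\mathcal V(\mathbf a_0)=\sum_{\Delta}\bigl(\Lambda(\alpha_\Delta)+\Lambda(\beta_\Delta)+\Lambda(\gamma_\Delta)\bigr)$, where $\Lambda$ is the Lobachevsky function and each summand attains its maximum $v_3$ precisely at the regular shape $\alpha=\beta=\gamma=\pi/3$.

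I would then choose $\mathbf a_0$ so that each transitional tetrahedron is as close to regular as the edge equations permit, while the tetrahedra inside a twist region are assigned shapes flattening toward the degenerate, volume-$0$ end; the latter may be taken with nonnegative volume, so they only help the inequality. Summing over the $\sim 2(n-1)$ near-regular tetrahedra then yields a bound of the shape $\mathrm{vol}\geqslant v_3(2n-c)$, and the content of the theorem is that the optimal bookkeeping gives the explicit constant $c=8/3=2.6667\ldots$. Finally, when $2n-8/3<2$ (the case $n=2$), the stated $\max\{2,\cdot\}$ is supplied by the absolute lower bound $\mathrm{vol}\geqslant 2v_3$: by Cao--Meyerhoff the figure-eight knot complement and its sibling are the minimum-volume orientable cusped hyperbolic $3$-manifolds, so every hyperbolic $K(p,q)$ satisfies $\mathrm{vol}\geqslant 2v_3$, matching the right-hand side of \eqref{equ1} in that range.

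The main obstacle is extracting the exact additive constant $8/3$. This requires controlling precisely how far the boundary and transitional tetrahedra are forced away from the regular shape by the edge conditions, and in particular the tetrahedra sitting at the positions with $a_i=1$ — exactly the degenerate-looking places treated elsewhere in this paper — must be accounted for carefully, so that the total volume deficit below $2n\,v_3$ is bounded by no more than $\tfrac{8}{3}v_3$. A secondary point needing care is simply verifying that the chosen shape assignment is a genuine angle structure, with all dihedral angles positive, summing to $\pi$ in each tetrahedron and to $2\pi$ around each edge; this is precisely the constraint that forbids taking all hinge tetrahedra perfectly regular and is therefore what produces the nonzero correction term.
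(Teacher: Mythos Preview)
The paper does not prove this theorem: it is quoted verbatim from \cite{PV} and used as a black box in the proof of Corollary~\ref{cor2}. There is therefore no ``paper's own proof'' to compare your proposal against.

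That said, your sketch is broadly the right \emph{origin story} for the inequality. The bound in \cite{PV} is itself imported from the work of Gu\'eritaud (and the Futer appendix to it) on angle structures on the Sakuma--Weeks layered triangulation of a two-bridge link complement, and the mechanism is exactly the Casson--Rivin concavity argument you describe: exhibit one explicit angle structure and sum Lobachevsky functions. The invocation of Cao--Meyerhoff for the $\max\{2,\cdot\}$ clause is also correct. Where your outline is imprecise is in the combinatorics: the layered triangulation is indexed not by the partial quotients $a_i$ directly but by the ``syllables'' of the $RL$-word, and the relevant count that produces the factor $2n$ is the number of hinge tetrahedra (two per sign change in the $\pm$-expansion), not ``$2(n-1)$ transitional tetrahedra'' in the way you phrase it. Getting the constant $8/3$ out of this requires Gu\'eritaud's rather delicate monotonicity and convexity estimates on the explicit angle assignment, which your proposal gestures at but does not supply; that is the genuine technical content, and it is not something one can fill in by bookkeeping alone.
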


A spine is called {\it simple} if the link of each point is either a circle, a theta-graph, or $K_4$, and it is called {\it special} if each 2-strata of the simple spine is an open disk. Remark that if $P$ is a special spine of a link complement $M$, then its dual is a topological ideal triangulation of $M$, and vice versa. 

\begin{thm}[\cite{Matveev}]\label{thmM}
Let $M$ be a compact, irreducible and boundary-irreducible 3-manifold which differs from a 3-ball, $S^3$, $\RP^3$, $L(3,1)$ and suppose that all proper annuli in $M$ are inessential. Then, for any almost-simple spine of $M$, there exists a special spine of $M$ which has the same or a fewer number of true vertices.
\end{thm}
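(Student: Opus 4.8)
The statement is Matveev's theorem, and the natural plan is the one in \cite{Matveev}: starting from a given almost-simple spine $P$ of $M$, push it through a finite sequence of local modifications and collapses to a special spine, verifying at each step that the number of true vertices never increases. The structural fact that makes the modifications harmless is that, since $M$ collapses onto $P$, the complement $M\setminus P$ is an open collar $\partial M\times[0,1)$ (in the closed case, $\partial M$ is replaced by the $S^2$ bounding the removed ball); surgeries performed inside this collar do not touch the singular graph of $P$.

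The first step is to replace $P$ by a \emph{simple} spine with no more true vertices. Every point of $P$ has a link embeddable in $K_4$, and the non-generic links (an isolated arc or tripod, a circle with a single chord, two circles meeting at a point, and so on) occur only on a subpolyhedron of dimension at most one. They can be removed by small general-position moves — separating sheets and collapsing the free faces so created — all supported near the $1$-skeleton and never producing a point with link $K_4$. After finitely many such moves each $2$-stratum of $P$ is an open surface, each $1$-stratum an open arc or circle of triple points, and the only vertices are true vertices.

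Next I would make every $2$-component a disk. If a $2$-component $F$ is not a disk, choose a simple closed curve $\gamma\subset F$ essential in $F$ and push it off $P$ into the collar, so it lies on a copy of $\partial M$. If $\gamma$ is inessential there, irreducibility and boundary-irreducibility give a disk $D\subset M$ with $\partial D=\gamma$ that may be taken inside the collar; surgering $F$ along $\gamma$ using two parallel copies of $D$ is supported in the collar, away from the singular graph, so it leaves the true vertices untouched while simplifying $F$. If $\gamma$ is essential on $\partial M$, the annulus in the collar between $\gamma$ and $\partial M$ together with the hypothesis that all proper annuli in $M$ are inessential forces a contradiction, returning us to the previous case. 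Iterating yields a simple spine whose $2$-components are all open disks. It then remains to specialize the singular graph — eliminate singular circles and ensure the spine is cellular and connected in the required sense — using Matveev's combinatorial moves, each of which is either a collapse or is neutralized by an accompanying collapse, so that again no true vertex is created; the degenerate configurations in which such a move would be forced to add a true vertex are exactly those in which $M$ is $B^3$, $S^3$, $\RP^3$ or $L(3,1)$, which is why these are excluded. Since every collapse only removes true vertices, the final special spine has at most as many true vertices as $P$.

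The main obstacle is the combination of the disk-surgery step with the final specialization: one must check that irreducibility, boundary-irreducibility, the absence of essential annuli, and the exclusion of the four small manifolds are \emph{exactly} enough to guarantee both that every non-disk $2$-component compresses inside the collar and that the concluding combinatorial specialization never requires a new true vertex. This is the substance of Matveev's argument in \cite{Matveev}, to which I refer for the full details.
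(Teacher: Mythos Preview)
The paper does not prove this theorem at all: it is quoted verbatim from Matveev's book \cite{Matveev} and used as a black box in the proof of Proposition~\ref{prop1}. Your outline is a faithful sketch of Matveev's original argument (simplify to a simple spine, compress non-disk $2$-strata using irreducibility, boundary-irreducibility and the absence of essential annuli, then specialize the singular graph with the exceptional manifolds excluded), so there is nothing to compare against here; your proposal and the paper both defer to \cite{Matveev} for the details.
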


Now we give a few notations. Suppose that $K(p,q)$ is hypebolic. Let $\mathcal{T}$ denote a topological ideal triangulation of $S^3\setminus {\rm int}N(K(p,q))$, $n(\mathcal{T})$ denote the number of ideal tetrahedra of $\mathcal{T}$ and $\sigma_\text{ideal}(S^3 \setminus {\rm int}N(K(p,q)))$ denote the minimal number of $n(\mathcal{T})$. By inequality (\ref{equ1}) we have
\begin{equation}\label{equ3}
\begin{split}
\sigma_{\rm ideal}(S^3 \setminus {\rm int}N(K(p,q))) &\geqslant \frac{\text{vol}(S^3 \setminus {\rm int}N(K(p,q)))}{v_3} \\
&\geqslant \max \{ 2,2n-2.6667 \dots \}.
\end{split}
\end{equation}

The next proposition suggests that we can replace the left hand side of inequality (\ref{equ3}) by the complexity $c(S^3\setminus {\rm int}N(K(p,q)))$. 

\begin{prop}\label{prop1}
If $L$ is a hyperbolic link then, 
\[ 
	\sigma_\mathrm{ideal}(S^3 \setminus \mathrm{int}N(L)) = c(S^3 \setminus \mathrm{int}N(L)).
\]
\end{prop}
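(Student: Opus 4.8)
The plan is to prove equality by establishing the two inequalities separately. The inequality $c(S^3\setminus\mathrm{int}N(L))\leqslant\sigma_\mathrm{ideal}(S^3\setminus\mathrm{int}N(L))$ is immediate: given any topological ideal triangulation $\mathcal T$ of $S^3\setminus\mathrm{int}N(L)$ with $n(\mathcal T)$ tetrahedra, its dual spine is a special (hence almost-simple) spine with exactly $n(\mathcal T)$ true vertices, so $c\leqslant n(\mathcal T)$ for every $\mathcal T$, and taking the minimum gives $c\leqslant\sigma_\mathrm{ideal}$.

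For the reverse inequality $\sigma_\mathrm{ideal}\leqslant c$, the natural tool is Theorem \ref{thmM}. First I would verify that $M=S^3\setminus\mathrm{int}N(L)$ satisfies the hypotheses of that theorem when $L$ is hyperbolic: $M$ is compact, irreducible and boundary-irreducible (a hyperbolic link complement has no essential spheres, discs, or compressible boundary), it is not a 3-ball, $S^3$, $\RP^3$ or $L(3,1)$ since its interior is hyperbolic, and every proper annulus in $M$ is inessential — an essential annulus would contradict hyperbolicity by providing an essential torus after doubling, or directly by Thurston's hyperbolization obstructions, so $M$ is anannular. Having checked these, Theorem \ref{thmM} applies: take an almost-simple spine realizing the complexity $c(M)$ with exactly $c(M)$ true vertices; the theorem produces a special spine $P$ of $M$ with at most $c(M)$ true vertices. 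Dualizing this special spine gives a topological ideal triangulation $\mathcal T$ with $n(\mathcal T)\leqslant c(M)$ ideal tetrahedra, whence $\sigma_\mathrm{ideal}(M)\leqslant c(M)$.

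Combining the two inequalities yields $\sigma_\mathrm{ideal}(S^3\setminus\mathrm{int}N(L))=c(S^3\setminus\mathrm{int}N(L))$, as claimed.

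I expect the main obstacle to be the careful verification that a hyperbolic link complement is anannular, i.e.\ that all proper annuli in $M$ are inessential. This is where one must invoke the structure theory: a hyperbolic 3-manifold of finite volume contains no essential annuli (an essential annulus in a cusped hyperbolic manifold would yield either an accidental parabolic or an essential torus, both forbidden). The remaining hypotheses of Theorem \ref{thmM} are standard consequences of hyperbolicity and the Geometrization/Thurston picture, and the duality between special spines and ideal triangulations is exactly the correspondence recalled in the paragraph preceding Theorem \ref{thmM}, so those steps are routine.
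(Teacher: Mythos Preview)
Your proof is correct and follows essentially the same approach as the paper: one inequality comes from the inclusion of special spines among almost-simple spines, and the other from applying Theorem~\ref{thmM} (after noting that a hyperbolic link complement is irreducible, boundary-irreducible, and anannular) and then dualizing the resulting special spine to an ideal triangulation. Your write-up merely spells out in more detail why the hypotheses of Theorem~\ref{thmM} hold, which the paper asserts without elaboration.
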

\begin{proof}
Let us prove the inequality ($\leqslant$). Since $S^3\setminus \mathrm{int}N(L)$ is hyperbolic, it is irreducible and boundary-irreducible, and contains no essential annuli. By Theorem \ref{thmM}, we can deform any almost-simple spines of $S^3\setminus \mathrm{int}N(L)$ into a special one such that it has the same or a fewer number of true vertices. Since the dual of a special spine is a topological ideal triangulation of $S^3\setminus \mathrm{int}N(L)$, we have $\sigma_\text{ideal}(S^3 \setminus \mathrm{int}N(L)) \leqslant c(S^3 \setminus \mathrm{int}N(L))$.
The inverse inequality ($\geqslant$) is obvious, since \{ Special spine \} $\subset$ \{ Almost-simple spine \}.
\end{proof}  

From Theorem \ref{thm1}, Theorem \ref{thmPV} and Proposition \ref{prop1}, we can determine the exact values of the complexities for an infinite sequence of two-bridge links as mentioned in Corollary \ref{cor2}.

\begin{proof}[Proof of Corollary \ref{cor2}]
Let $K(p,q)$ be hyperbolic. By Theorem \ref{thmPV} and Proposition \ref{prop1}, the following inequality holds:
\[
\begin{split}
2n-2.66 \leqslant \sigma_\text{ideal}(S^3 \setminus \mathrm{int}N(K(p,q)))&=c(S^3 \setminus \mathrm{int}N(K(p,q))) \\
&\leqslant \displaystyle\sum_{i=1}^{n}a_i+2(n-3)-\sharp \{ a_i=1\}. 
\end{split}
\]
In particular, if $K(p,q)=C(2,1,\ldots ,1,2)$ then 
\[
\begin{split}
2n-2.66 \leqslant \sigma_\text{ideal}(S^3 \setminus \mathrm{int}N(K(p,q))) &= c(S^3 \setminus \mathrm{int}N(K(p,q))) \\
&\leqslant 2n-2.
\end{split}
\]
\end{proof}

\begin{rmk}\label{rmk2}
Sakuma and Weeks constructed canonical decompositions of hyperbolic two-bridge link complements explicitly in \cite{SW}. 
Calculating the number of ideal tetrahedra in their ideal triangulation, we get the upper bound

\begin{equation}\label{eq5}
\sigma_\mathrm{ideal}(S^3\setminus \mathrm{int}N(K(p,q)))
\leqslant 2\sum_{i=1}^n a_i -6.
\end{equation}

Let $c$ be the number of true vertices of the spine constructed in Theorem \ref{thm1}. We can obtain a special spine with the same as or a fewer number of vertices than $c$ by applying Theorem \ref{thmM}. Hence by considering its dual, we can obtain a topological ideal triangulation of $S^3\setminus K(p,q)$ consisting of at most $c$ ideal tetrahedra. If $p/q=[2,1,\ldots ,1,2]$ then the upper bound $c$ of the number of ideal tetrahedra constructed in Theorem \ref{thm1} coincides with the upper bound in inequality (\ref{eq5}). In general, the upper bounds obtained by our construction are better than those obtained in \cite{SW}.

\end{rmk}

Finally we give a proof of Corollary \ref{cor3}.

\begin{proof}[Proof of Corollary \ref{cor3}]

Since $a_1>1$, there exists a tube connecting $A_1$ and $A_2$
such that the union of the meridian-disk $D$ of this tube
and the spine $P'$ constructed in the proof of Theorem \ref{thm1}
has only one true vertex on the boundary of $D$.
Let $P_d$ be a spine of the $d$-fold cyclic covering space $\widetilde{M}$ of $S^3\setminus K(p,q)$ induced by $P'$.

Suppose that $p$ is odd, that is, $K(p,q)$ is a knot. 
Recall that the complexity of a closed 3-manifold is by definition the complexity of that manifold minus an open ball. Therefore, the complexity of $M_d(K(p,q))$ is at most the number of true vertices of the spine obtained from $P_d$ by attaching a meridian-disk along the preimage of the boundary of $D$. Thus we have

\[
c(M_d(K(p,q)))\leqslant d\Bigl( \sum_{i=1}^n a_i +2(n-3)-\sharp \{a_i=1\}\Bigr)+d.
\]

Suppose that $p$ is even, that is, $K(p,q)$ is a link. 
We attach  one more meridian-disk $D'$ to the other boundary component
of $\widetilde M$ such that the union of $P_d$, $D$ and $D'$ has two true
vertices on the boundary of $D'$.
It is known in \cite{Matveev} that the complexity does not change even if we remove several open balls. 
Therefore, the complexity of $M_d(K(p,q))$ is bounded above by
the number of true vertices of this union. Thus we have

\[
c(M_d(K(p,q)))\leqslant d\Bigl( \sum_{i=1}^n a_i +2(n-3)-\sharp \{ a_i=1\}\Bigr)+3d.
\] 

\noindent This completes the proof.
\end{proof}

\begin{rmk}\label{rmk1}
The complexities of $3$-manifolds obtained as meridian-cyclic branched coverings along two-bridge links had been studied in \cite{PV}. We can easily check that the upper bound in Corollary \ref{cor3} is better than theirs.
\end{rmk}

\vspace{10mm}

\noindent{\sc Mathematical Institute, Tohoku University, Sendai, 980-8578, Japan}\\
\email{{\it E-mail address}: ishikawa@math.tohoku.ac.jp}

\vspace{2.5mm}

\noindent\address{{\sc Mathematical Institute, Tohoku University, Sendai, 980-8578, Japan}}\\
\email{{\it E-mail address}: keisuke.nemo0121@gmail.com}

\end{document}